\DeclareMathSymbol{\lsb@l}{\mathalpha}{letters}{`l}
\let\@fnsymbol\@arabic
\tikzset{
	vertex/.style={circle,fill=black,draw,minimum size=5pt,inner sep=0pt},
	edgeS/.style={line width=.7pt},
	edgeD/.style={line width=.7pt,dashed},
}
\theoremstyle{plain}
\newtheorem{theorem}{Theorem}
\newtheorem{corollary}[theorem]{Corollary}
\newtheorem{proposition}[theorem]{Proposition}
\newtheoremstyle{proof}%
{\item[\rlap{\vbox{\hbox{\hskip\labelsep \theorem@headerfont
##1\theorem@separator}\hbox{\strut}}}]}%
{\item[\rlap{\vbox{\hbox{\hskip\labelsep \theorem@headerfont ##1\ ##3\theorem@separator}\hbox{\strut}}}]}
\theoremstyle{proof}
\newtheorem{proof}{Proof}
\newcommand\need[1]{\par \penalty-100 \begingroup %
   \dimen@\pagegoal \advance\dimen@-\pagetotal %
   \ifdim #1>\dimen@ %
      \ifdim\dimen@>\z@ \vskip -\pagedepth plus 1fil \fi
      \break
   \fi \endgroup}
\newcommand\newparagraph{\vspace*{3ex}}
\title{A Construction of Uniquely Colourable Graphs with Equal Colour Class Sizes}
\author{Samuel Mohr\thanks{Gefördert durch die Deutsche Forschungsgemeinschaft (DFG) -- 327533333.}}
\affil{Institut für Mathematik der Technischen Universität Ilmenau, Weimarer Straße 25, 98693 Ilmenau, Germany}
\begin{document}

\maketitle

\thispagestyle{plain}

\begin{abstract}
\setlength{\parindent}{0em}
\setlength{\parskip}{1.5ex}
\noindent

A \emph{uniquely $k$\=/colourable} graph is a graph with exactly one partition of the vertex set into at most $k$ colour classes. 
Here, we investigate some constructions of uniquely $k$\=/colourable graphs and give a construction of $K_k$\=/free uniquely $k$\=/colourable graphs with equal colour class sizes. 

\medskip

\textbf{AMS classification:} 05C15.

\textbf{Keywords:} Uniquely colourable graphs, proper vertex colouring, critical chromatic number. 

\end{abstract}

We use the standard terminology of graph theory and consider \emph{simple}, \emph{finite} graphs $G$ with vertex set $V(G)$ and edge set $E(G)$. 
A \emph{$k$\=/colouring} of a graph $G$ with $k\in\mathbb{N}$ is a partition $\mathcal{C}$ of the vertex set $V(G)$ into $k'\leq k$ non\-/empty sets $A_1,\dots,A_{k'}$. 
The colouring $\mathcal{C}$ is called \emph{proper} if each set is an independent set of $G$, that means that there are no two adjacent vertices of $G$ in the same \emph{colour class} $A\in\mathcal{C}$. 
The \emph{chromatic number} $\chi(G)$ is the minimum $k$ such that there is a proper $k$\=/colouring of $G$.

We call a graph $G$ \emph{uniquely $k$\=/colourable} if $\chi(G)=k$ and for any two proper $k$\=/colourings $\mathcal{C}$ and $\mathcal{C}'$ of $G$, we have $\mathcal{C}=\mathcal{C}'$. 
It is easy to see that the complete graph $K_k$ on $k$ vertices is uniquely $k$\=/colourable and we can obtain a family of uniquely $k$\=/colourable graphs by consecutively adding a vertex to the vertex set of a uniquely $k$\=/colourable
graph and joining it to all vertices except those of one colour class. 
This raises the question if all uniquely $k$\=/colourable graphs contain $K_k$ as a subgraph. 

\newparagraph

The properties of uniquely colourable graphs have been widely studied, for example in \cite{chartrand1969uniquely,harary1969uniquely,shaoji1990size,akbari2001kr,nevsetvril1973uniquely,bollobas1976uniquely,emden1998uniquely}. 
One such property---to be found in \cite{chartrand1969uniquely}---is that the union of any two distinct colour classes induces a connected graph. 
Assume to the contrary that there is a graph $G$ with unique colouring $\mathcal{C}$ and there are $A,B\in\mathcal{C}$, $A\neq B$,  such that $G[A\cup B]$ has at least two components. 
Let $H$ be such a component and consider the colouring $\tilde{\mathcal{C}}$ with $\tilde{\mathcal{C}}=(\mathcal{C}\setminus\{A,B\}) \cup \{(A\setminus V(H)) \cup (B\cap V(H))\} \cup \{(B\setminus V(H)) \cup (A\cap V(H))\}$. 
The new partition  $\tilde{\mathcal{C}}$ is obtained from the
original partition interchanging $A\cap V(H)$ and $B\cap V(H)$.
Then $\tilde{\mathcal{C}}$ is a proper colouring distinct from $\mathcal{C}$, a contradiction. 
We say $\tilde{\mathcal{C}}$ is obtained from $\mathcal{C}$ by a \emph{Kempe change along $H$}. 

This implies that in a uniquely $k$\=/colourable graph every vertex has a neighbour in every other colour class. Hence, a uniquely $k$\=/colourable graph is connected and has minimum degree at least $k-1$. 
Furthermore, a uniquely $k$\=/colourable graph is $(k-1)$-connected. 
To see this, assume that there is a non\-/complete  graph $G$ with a unique $k$\=/colouring $\mathcal{C}$ and for two non\-/adjacent vertices $x,y$, there is a separator $S$ with $|S|\leq k-2$ such that $x$ and $y$ are in distinct components of $G- S$. 
But then there are distinct $A,B\in \mathcal{C}$ with $A\cap S=\emptyset=B\cap S$ and $(G-S)[A\cup B]=G[A\cup B]$ is connected. Since $x$ and $y$ have neighbours in $A\cup B$, they cannot be separated by $S$, a contradiction. 

\newparagraph

This question whether a uniquely $k$\=/colourable graphs always contains $K_k$ as a subgraph was first disproved by Harary, Hedetniemi, and Robinson~\cite{harary1969uniquely} in 1969. 
They gave a uniquely 3\=/colourable graph $F$ without triangles. For $k\geq 4$, an example of a uniquely $k$\=/colourable graph without $K_k$ is $F+K_{k-3}$, where $G_1+G_2$ is the complete join of  graphs $G_1$ and $G_2$. 

Several years later, Xu~\cite{shaoji1990size} proved that the number of edges of a uniquely $k$\=/colourable graph on $n$ vertices is at least $(k-1)\,n-\binom{k}{2}$ and that this is best possible. 
He further conjectured that uniquely $k$\=/colourable graphs with exactly this number of edges have $K_k$ as a subgraph~\cite{shaoji1990size}. 
This conjecture was disproved by Akbari, Mirrokni, and Sadjad~\cite{akbari2001kr}. They constructed a $K_3$\=/free uniquely 3\=/colourable graph $G$ on 24 vertices and 45 edges. For the cases of $k\geq 4$, again $G+K_{k-3}$ disproves the conjecture. 

We are interested in constructions of uniquely $k$\=/colourable graphs  such that the colour classes have “nearly the same size”. 
One useful concept for this is the critical chromatic number introduced by Komlós~\cite{komlos2000tiling} in the context of bounds on a Tiling Turán number. 
Given a $k$\=/colourable graph $H$  on $h$ vertices, let $\sigma(H)$ be the smallest possible size of a colour class in any proper $k$\=/colouring of $H$. 
Then the  \emph{critical chromatic number} is defined by
\begin{align*}
\chi_{cr}(H)=(\chi(H)-1)\cdot\frac{h}{h-\sigma(H)}. 
\end{align*}
The critical chromatic number fulfils $\chi(H)-1<\chi_{cr}(H)\leq \chi(H)$ and equality holds if and only if in every $k$\=/colouring of $H$ the colour classes have exactly the same size. 

All constructions above produce graphs with critical chromatic number tending to $\chi(H)-1=k-1$.

\newparagraph

In the following, we give a new construction of uniquely $k$\=/colourable graphs. 
Given a uniquely $k$\=/colourable graph $H$ without $K_k$ and $\chi_{cr}(H)=\chi(H)$, this construction leads to a uniquely $(k+1)$\=/colourable graph $G$ without $K_{k+1}$ and $\chi_{cr}(G)=\chi(G)$. 
We further compare this construction with a result of Nešetřil~\cite{nevsetvril1973uniquely} and a probabilistic proof for the existence of uniquely colourable graphs by Bollobás and Sauer in~\cite{bollobas1976uniquely}.

\newparagraph

\centerline{*}

\newparagraph

\subsubsection*{Construction}

Let $H$ be a $k$\=/colourable graph with a proper $k$\=/colouring $\mathcal{C}=\{A_1,\dots,A_k\}$.
We obtain a graph $G=\nu(H)$ with a proper $(k+1)$\=/colouring $\mathcal{C}'$ by adding $k$ copies of $H$ to $H$ (for each colour class of $\mathcal{C}$ one copy), joining each copy to the original graph by edges in the same way as in the lexicographic product of $H$ and $\overline{K_2}$, and finally inserting edges such that the $k$ copies of each vertex of $H$ induce a star with central vertex in the copy of $H$ corresponding to the colour class of the original vertex in $H$. 

Thus $G=\nu(H)$ with a proper $(k+1)$\=/colouring $\mathcal{C}'$ consists of
\begin{align*}
V(G)=V(H)&\cup\{v^p\mid v\in V(H), p=1,\dots ,k\}, \\
E(G)=E(H)&\cup\{v^pu^p\mid vu\in E(H), p=1,\dots ,k\}\\
&\cup \{vu^p\mid vu\in E(H), p=1,\dots ,k\}\\
&\cup \{v^pv^q\mid v\in A_p, q\in\{1,\dots ,k\}\setminus\{p\}\}
\end{align*} 
and 
\begin{align*}
\mathcal{C}'=\{A_i'  \mid i=1,\dots,k\}\cup\{\{v^p \mid v\in A_p,p=1,\dots,k\}\}
\end{align*}
with $A_i'=\{v,v^p \mid v\in A_i,p\in\{1,\dots ,k\}\setminus\{i\}\}$.

\begin{theorem}\label{thm}
Let $H$ be a uniquely $k$\=/colourable graph with $k\geq 3$, then $\nu(H)$ is uniquely $(k+1)$\=/colourable.  
\end{theorem}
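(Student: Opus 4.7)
The plan is to take an arbitrary proper $(k+1)$\=/colouring $\mathcal{D}$ of $G := \nu(H)$, with colour function $c$, and show $\mathcal{D} = \mathcal{C}'$; this simultaneously gives $\chi(G) = k+1$. The argument has two stages: first show that the restriction to the original vertices agrees with $\mathcal{C}$, and then show that the copies must follow $\mathcal{C}'$.

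\emph{Stage~1: $\mathcal{D}_H := \mathcal{D}|_{V(H)} = \mathcal{C}$.} Since $\chi(H) = k$, $\mathcal{D}_H$ uses either $k$ colours---which finishes by unique $k$\=/colourability of $H$---or all $k+1$ classes $C_1,\ldots,C_{k+1}$. I rule out the latter. Set $S_v := \{c(w) : w \in N_H(v)\}$. If some $v$ has $|S_v| = k$, then the only colour available for each copy $v^p$ is $c(v)$, so $c(v^1) = \cdots = c(v^k) = c(v)$, collapsing the star at $v$---contradiction. Otherwise $|S_v| \leq k-1$ for every $v$; then for any $\gamma$ and every $v \in C_\gamma$ there is at least one escape colour $\beta(v) \in \{1,\ldots,k+1\} \setminus (S_v \cup \{\gamma\})$. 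Recolouring $C_\gamma$ via any such $\beta$ gives a proper colouring of $H$ with at most $k$ colours, which by $\chi(H) = k$ and uniqueness of $\mathcal{C}$ equals $\mathcal{C}$ as a partition. Consequently each unchanged class $C_\lambda$ ($\lambda \neq \gamma$) is contained in a single $\mathcal{C}$\=/class; letting $\gamma$ vary shows that every $\mathcal{D}_H$\=/class lies inside some $\mathcal{C}$\=/class, so $\mathcal{D}_H$ refines $\mathcal{C}$. This forces exactly one $A_{m_0}$ to split into two $\mathcal{D}_H$\=/classes, while every other $A_j$ ($j \neq m_0$) is itself a $\mathcal{D}_H$\=/class $C_{\tau(j)}$. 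Then eliminating $C_{\tau(j_0)} = A_{j_0}$ for any $j_0 \neq m_0$ should again yield $\mathcal{C}$, in particular producing $A_{j_0}$ as some end class $C_\lambda \cup D_\lambda$ with $\lambda \neq \tau(j_0)$; such a $C_\lambda$ would be a non\=/empty $\mathcal{D}_H$\=/class inside $A_{j_0}$, but $C_{\tau(j_0)}$ is the unique such class---contradiction.

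\emph{Stage~2: $\mathcal{D} = \mathcal{C}'$.} Label colours so that $A_i$ receives colour $i$ for $i = 1,\ldots,k$ and colour $k+1$ is the one missing from $V(H)$. Since each $v \in A_i$ has (by uniqueness of $\mathcal{C}$) neighbours in every other $A_j$, the forbidden set for $c(v^p)$ is $\{1,\ldots,k\} \setminus \{i\}$, giving $c(v^p) \in \{i, k+1\}$. The star at $v$ centred at $v^i$ admits exactly two patterns: (I) $c(v^i) = i$ and $c(v^p) = k+1$ for $p \neq i$, or (II) $c(v^i) = k+1$ and $c(v^p) = i$ for $p \neq i$; pattern (II) is $\mathcal{C}'$. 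For any edge $vu \in E(H)$ with $v \in A_i,\, u \in A_j$, I verify that any assignment using pattern (I) at either endpoint produces a level $p$ with $c(v^p) = k+1 = c(u^p)$---using $k \geq 3$ to pick $p \notin \{i, j\}$ when both endpoints are (I), and $p \in \{i,j\}$ in the mixed cases---violating the copy edge $v^p u^p$. Hence both endpoints of every edge are in pattern (II), and connectedness of $H$ forces pattern (II) globally, so $\mathcal{D} = \mathcal{C}'$.

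The main obstacle is Stage~1 and in particular the sub\=/case $|S_v| \leq k-1$: here the uniqueness of $\mathcal{C}$ must be used both to force $\mathcal{D}_H$ to refine $\mathcal{C}$ and then to obstruct the recovery of a whole $\mathcal{C}$\=/class after its corresponding $\mathcal{D}_H$\=/class has been eliminated.
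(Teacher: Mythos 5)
Your proposal is correct, and for the crucial half of the argument it takes a genuinely different route from the paper. Stage~2 is essentially the paper's endgame: once the trace of the colouring on $V(H)$ is known to be $\mathcal{C}$, both proofs observe that each copy $v^p$ of $v\in A_i$ can only receive colour $i$ or the extra colour, and then use the stars together with the copy edges $v^pu^p$ (and $k\geq 3$) to force the ``centre gets the extra colour'' pattern everywhere; your (I)/(II) dichotomy is just a slightly different bookkeeping of the paper's argument about the star centres $v^*$. The real divergence is in ruling out the case that all $k+1$ colours appear on $V(H)$. The paper does this by choosing, for each class $D$ meeting $V(H)$, a copy $f^D(v)=v^{p_v}$ for the vertices $v\in D$, so that $G[X(D)]$ is an isomorphic copy of $H$ on which the colouring induces the unique $k$-colouring; it then plays the maps $f^D$ for different classes against each other and invokes the fact that in a uniquely colourable graph the union of two colour classes induces a connected subgraph (so some vertex would need neighbours in all classes but its own, forcing its whole star into one class). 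You instead stay on $V(H)$: the copies guarantee that no original vertex sees all $k$ remaining colours in its $H$-neighbourhood (otherwise its star collapses), which gives every vertex of a chosen class an escape colour; recolouring that class yields a proper $\leq k$-colouring of $H$, hence $\mathcal{C}$ by uniqueness, and from this you extract that the restricted partition refines $\mathcal{C}$, that exactly one class of $\mathcal{C}$ splits, and finally a contradiction by eliminating a non-split class (the surviving class $C_\lambda$ would have to be a non-empty class contained in, yet disjoint from, $A_{j_0}$). Your recolouring/refinement argument is more elementary and self-contained---it needs neither the connectivity of two-class unions nor the isomorphism bookkeeping across the various $f^D$---while the paper's embedding viewpoint is more compact and makes structurally visible why copies of $H$ sit inside $\nu(H)$. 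One small point worth making explicit in a write-up: a proper $(k+1)$-colouring may a priori use only $k$ classes, so in Stage~2 one should fix a palette of $k+1$ colour names (the ``missing'' colour possibly unused) and note that the star argument then forces the $(k+1)$-st colour to be used; this also cleanly delivers $\chi(\nu(H))=k+1$ once $\mathcal{C}'$ is checked to be proper.
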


\begin{proof}
First, it is straightforward to check that $\mathcal{C}'$ is a proper colouring with $k+1$ colours. 
Therefore, assume that there is another colouring $\tilde{\mathcal{C}}$ with $k+1$ colours. 

Fix a colour class $D\in\tilde{\mathcal{C}}$ with $V(H)\cap D\neq\emptyset$. 
If $v\in V(H)\cap D$, vertices $\{v^1,\dots ,v^k\}$ belong  to at least two distinct colour classes of $\mathcal{C}'$. 
Thus, there is an index $p_v\in\{1,\dots,k\}$ such that $v^{p_v}\notin D$. 
Let a mapping $f^D:V(H)\to V(G)$ be defined by $f^D(v)=v$ if $v\in V(H)\setminus D$ and $f^D(v)=v^{p_v}$ otherwise, and  $X(D)=\{f^D(v)\mid v\in V(H)\}$. 
Then $f^D$ represents an isomorphism between $H$ and $G[X(D)]$, and the colouring $\tilde{\mathcal{C}}$ induces a $k$\=/colouring of $G[X(D)]$, which is unique (by the unique
$k$\=/colourability of $H$). 
Therefore $f^D(v)$ and $f^D(w)$ with $v,w\in V(H)$ are in the same colour class of $\tilde{\mathcal{C}}\setminus \{D\}$ if and only if 
$f^C(v)$ and $f^C(w)$  are in the same colour class of $\tilde{\mathcal{C}}\setminus \{C\}$ for any colour class  $C\in\tilde{\mathcal{C}}\setminus \{D\}$ satisfying $V(H)\cap C\neq\emptyset$. 

Assume first that $V(H)\cap D\neq \emptyset$ for all $D\in\tilde{\mathcal{C}}$. 
Let us show that $f^D$ maps $V(H)\cap D$ to a subset of one colour class of $\tilde{\mathcal{C}}\setminus \{D\}$.
Indeed, otherwise, there were $v,w\in V(H)\cap D$, $v\neq w$ and $A,B\in \tilde{\mathcal{C}}\setminus\{D\}$, $A\neq B$ such that  $f^D(v)\in A$ and $f^D(w)\in B$.
Since $|\tilde{\mathcal{C}}|=k+1\geq 4$, there is $C\in\tilde{\mathcal{C}}\setminus\{A,B,D\}$. 
Then both $f^C(v)$ and $f^C(w)$ are in $D$, while  $f^D(v)\in A\neq B\ni f^D(w)$, a contradiction. 
Thus, for each $D\in\tilde{\mathcal{C}}$ there is a colour class $A^D\in\tilde{\mathcal{C}}$ such that $f^D(v)\in A^D$ for all $v\in V(H)\cap D$. 

According to our assumption with fixed $D\in\tilde{\mathcal{C}}$ we have $V(H)\cap A^D\neq \emptyset$. 
Then for each $B\in\tilde{\mathcal{C}}\setminus\{A^D,D\}$, the induced subgraph $G[X(D)\cap (A^D\cup B)]$ is connected. 
So there is a vertex $w\in V(H)\cap B$ such that $N_H(w)\cap A^D\neq \emptyset$.
This vertex $w$ has a neighbour in all colours classes except $B$, and, consequently, the same is true for each $w^i$ with $i\in\{1,\dots,k\}$.
Hence, $\{w,w^1,\dots ,w^k\}\subseteq B$, a contradiction. 

Therefore, the vertices of $H$ belong to $k$ colour classes of $\tilde{\mathcal{C}}$. Then, because of the unique $k$\=/colourability of $H$, 
$\{A\cap V(H)\mid A\in\tilde{\mathcal{C}}, A\cap V(H)\neq \emptyset\}=\mathcal{C}$. Let $D\in\tilde{\mathcal{C}}$ be the colour class satisfying $D\cap V(H)=\emptyset$. 
If $v\in V(H)$ and $A\in\tilde{\mathcal{C}}\setminus \{D\}$ are such that $v\in A$, vertices $\{v^1,\dots ,v^k\}$ belong  to the two colour classes $A,D\in\mathcal{C}'$ (since $N_G(v^i)\cap V(H)=N_H(v)$ for all $i\in\{1,\dots ,k\}$). 
Moreover, they induce a star $S_v$ in $G$ and let $v^*\in\{v^1,\dots ,v^k\}$ be the central vertex of $S_v$, i.\,e.\ $N_{S_v}(v^*)>1$. 
Assume first that there is $v\in V(H)$ such that $v^*\notin D$, then $\{v^1,\dots ,v^k\}\setminus\{v^*\}\subseteq D$. 
Let $w\in V(H)$ be a neighbour of $v$ with $w\in B\in\tilde{\mathcal{C}}$. 
With $v^*=v^p$ and $w^*=w^q$, then $\{w^1,\dots ,w^k\}\setminus \{w^p\}\subseteq B$. 
Since clearly $p\neq q$, for $r\in\{1,\dots ,k\}\setminus\{p,q\}$ we have $\{w^*,w^r\}\subseteq B$, 
which is in contradiction to $w^*w^r\in E(G)$.
Therefore, we can conclude that  $v^*\in D$ for all $v\in V(H)$, it follows $\tilde{\mathcal{C}}=\mathcal{C}'$; and the theorem is proved. 
\end{proof}

\newparagraph

\begin{proposition}\label{prop:properties}
The graph  $G=\nu(H)$ for a  $k$\=/colourable graph $H$ on $n$ vertices  satisfies:
\begin{enumerate}[label=\emph{\alph*})]
\item $G$ is uniquely $(k+1)$\=/colourable if $k\geq 3$ and $H$ is uniquely $k$\=/colourable,\label{enum:unique}
\item $\omega(G)=\omega(H)+1$, \label{enum:omega}
\item $\chi_{cr}(G)=\chi(G)$ if $\chi_{cr}(H)=\chi(H)$,\label{enum:critical}
\item $|E(G)| = (3k + 1)\,|E(H)| + (k - 1)\,n$ and $|V(G)| = (k + 1)\,n$,\label{enum:size}
\item The minimum degree of $G$ is $2\,\delta(H)+1$,\label{enum:delta}
\item $H$ is an induced subgraph of $G$. \label{enum:induced}
\end{enumerate}
\end{proposition}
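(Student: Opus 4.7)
Item~\ref{enum:unique} is Theorem~\ref{thm}. The plan is to dispatch the remaining items in increasing order of difficulty: \ref{enum:induced}, \ref{enum:size}, \ref{enum:delta} by a direct read\-off of the construction; \ref{enum:critical} by a variant of the argument in the proof of Theorem~\ref{thm}; and the main obstacle will be the clique count~\ref{enum:omega}.

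Item~\ref{enum:induced} follows because $E(H)$ is the only term in $E(G)$ with both endpoints in $V(H)$. For item~\ref{enum:size}, $|V(G)|=n+kn$, while $|E(G)|$ splits into $|E(H)|$ edges of $H$, $k|E(H)|$ edges inside the copies, $2k|E(H)|$ cross\=/edges (each $vu\in E(H)$ yields both $vu^p$ and $uv^p$ for every $p$), and $(k-1)n$ star\=/edges. For item~\ref{enum:delta} I would compute the degrees by vertex type: $v\in V(H)$ has $(k+1)\deg_H(v)$ neighbours; a copy $v^p$ with $v\in A_r$ has $2\deg_H(v)$ lex\=/product neighbours together with $k-1$ star neighbours if $p=r$ or just one (the centre $v^r$) otherwise. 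Minimising, and using that $\delta(H)\geq k-1\geq 2$ in our setting, gives $\delta(G)=2\delta(H)+1$, attained at a leaf copy of a minimum\=/degree vertex of~$H$.

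For item~\ref{enum:critical} I would repeat the structural analysis in the proof of Theorem~\ref{thm} on an arbitrary $(k+1)$\=/colouring $\tilde{\mathcal{C}}$ of $G$: exactly one class $D$ misses $V(H)$, the remaining $k$ classes restrict to a proper $k$\=/colouring $\{B_1,\dots,B_k\}$ of $H$, and for every $v\in V(H)$ precisely one copy of $v$ (the star centre) lies in $D$ while the remaining $k-1$ join the class of~$v$. This will force $|\tilde{A}_i|=k|B_i|$ and $|D|=n$, and the hypothesis $\chi_{cr}(H)=\chi(H)$ gives $|B_i|=n/k$; hence every class of $\tilde{\mathcal{C}}$ has size $n=|V(G)|/(k+1)$, i.e.\ $\chi_{cr}(G)=\chi(G)$.

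The hardest part will be item~\ref{enum:omega}. For the lower bound $\omega(G)\geq\omega(H)+1$ I pick a maximum clique $K\subseteq V(H)$, a vertex $v_0\in K\cap A_p$, and any $q\neq p$, and verify that $(K\setminus\{v_0\})\cup\{v_0^p,v_0^q\}$ is a clique by noting that $v_0^pv_0^q$ is a star edge (since $v_0\in A_p$) and that each $u\in K\setminus\{v_0\}$ yields the lex\=/edges $uv_0^p,uv_0^q$ via $uv_0\in E(H)$. The matching upper bound is the real obstacle: I decompose a clique $K\subseteq V(G)$ as $K_0\cup K_1\cup\dots\cup K_k$, with $K_0=K\cap V(H)$ and $K_i$ the part in the $i$\=/th copy, and exploit the key fact that between distinct layers $i,j\geq 1$ an edge $v^iu^j$ forces $v=u$ (and $v\in A_i\cup A_j$). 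Consequently, if two layers $K_i,K_j$ with $i,j\geq 1$ are both non\=/empty, then all their elements are copies of a single common vertex $v_0$. Two sub\=/cases remain. If only one layer $K_i$ ($i\geq 1$) is non\=/empty, then $K_0$ together with the underlying vertices of $K_i$ is a clique in $H$, giving $|K|\leq\omega(H)$. Otherwise the copies of $v_0$ in $K$ lie inside the star on $\{v_0^1,\dots,v_0^k\}$ and contribute at most~$2$; since $v_0\notin K_0$ (as $v_0v_0^p$ is not an edge) and every $u\in K_0$ is a neighbour of $v_0$ in $H$, the set $K_0\cup\{v_0\}$ is a clique in $H$, so $|K|\leq|K_0|+2\leq\omega(H)+1$.
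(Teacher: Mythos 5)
Your proposal is correct and follows essentially the same route as the paper: the identical lower-bound clique $(C_H\setminus\{v_0\})\cup\{v_0^p,v_0^q\}$, the same layer-by-layer analysis of a clique (inter-layer edges force a common underlying vertex, the star contributes at most two vertices, a single layer projects to a clique of $H$), the same degree/edge counts, and item~\ref{enum:critical} via the uniqueness of the $(k+1)$\=/colouring together with the class sizes $k\cdot\frac{n}{k}=n$. The only differences are cosmetic: you merge the paper's ``three layers impossible'' and ``exactly two layers'' cases into one, and you re-derive the structure of an arbitrary colouring instead of simply citing Theorem~\ref{thm} for \ref{enum:critical}.
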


\begin{proof}
By Theorem~\ref{thm} we showed \ref{enum:unique} and it is easy to see \ref{enum:induced} from the construction. Simply counting vertices and edges leads to \ref{enum:size}.

To show \ref{enum:omega}, let $C_H$ be a maximum clique of $H$ (cliques are vertex sets). 
If $v\in C_H\cap A_p$ for some $p\in \{1, \dots, k\}$, then with $q\in \{1,\dots , k\}\setminus \{p\}$ the set $(C_H\setminus\{v\})\cup\{v^p,v^q\}$ is a clique in $G$, 
hence $\omega(G)\geq |C_H|+1=\omega(H)+1$. 

Now let $C$ be a clique in $G$ with $C \nsubseteq V(H)$. 
Assume that there are distinct $r,s,t\in\{1,\dots,k\}$ and $u,v,w\in V(H)$ such that $u^r,v^s,w^t\in C$. 
Since these three vertices induce a triangle, it is $u=v=w$; however, $v^r,v^s,v^t$ can only induce a path in $G$, a contradiction. 
If there are distinct $r,s\in\{1,\dots,k\}$ and $u,v\in V(H)$ with $u^r,v^s\in C$, then $u = v$ and $(C \setminus \{v^r, v^s \}) \cup \{v\}$ is a clique in $H$, which implies
$|C| \leq  \omega(H) + 1$. 

Thus there is $p\in\{1,\dots,k\}$ such that $C\setminus V(H)\subseteq\{v^p\mid v\in V(H)\}$. 
Since $|C\cap \{v, v_p \}| \leq  1$ for each $v\in V(H)$, the set $(C\cap V(H))\cup \{v\mid v^p\in C\}$ induces a clique in $H$ of the size $|C| \leq \omega(H)$, which is not maximum one. 
Hence we conclude $\omega(G) = \omega(H) + 1$ and \ref{enum:omega} is proved. 

If $\chi_{cr}(H)=\chi(H)$, then each colour class of $H$ has size $s=\frac{|V(H)|}{\chi(H)}$. 
By the construction of the colouring $\mathcal{C}'$, each colour class of $G$ is of size $k s$. 
Since $\mathcal{C}'$ is unique, we get $\chi_{cr}(G)=\chi(G)$ and we have proved \ref{enum:critical}. 

By the construction, we obtain the following degree function for $v\in V(G)$, which shows \ref{enum:delta}. 
\begin{align*}
d_{G}(x)=\begin{cases}(k+1)\,d_H(v), &\text{if $x=v\in V(H)$,}\\2\,d_H(v)+k-1, &\text{if $x=v^p$ and $v\in A_p$,}\\2\,d_H(v)+1, &\text{if $x=v^p$ and $v\notin A_p$.}\end{cases}
\end{align*}
\end{proof}

\subsubsection*{Small  triangle\-/free uniquely 3\=/colourable graphs}

Using some computer calculation, Figure~\ref{fig1} shows a graph on 12 vertices and 22 solid drawn edges. 
Adding at most one of the dashed edges, we obtain one  of three non\-/isomorphic uniquely 3\=/colourable triangle\-/free graphs on 12 vertices with critical chromatic number 3. 

\begin{figure}
\begin{center}%
\begin{minipage}{.7\linewidth}%
\begin{center}%
\begin{tikzpicture}[scale=1.1]

\foreach \v in {2,5}  {
\node[vertex,fill=blue] (v\v) at (45*\v+22.5:1.08) {};
}
\foreach \v in {1,4,7}  {
\node[vertex,fill=red] (v\v) at (45*\v+22.5:1.08) {};
}
\foreach \v in {0,3,6}  {
\node[vertex,fill=green] (v\v) at (45*\v+22.5:1.08) {};
}

\node[vertex,fill=blue] (a) at (2,1) {};
\node[vertex,fill=red] (c) at (-2,-1) {};
\node[vertex,fill=blue] (b) at (2,-1) {};
\node[vertex,fill=green] (d) at (-2,1) {};

\foreach \x[evaluate=\x as \y using {int(Mod(\x+1,8))}] in {0,...,7} {
\draw[edgeS] (v\x)--(v\y);
}
\foreach \x[evaluate=\x as \y using {int(\x+4)}] in {0,...,3} {
\draw[edgeS] (v\x)--(v\y);
}

\draw[edgeS] (a)--(v1);
\draw[edgeS] (a)--(v7);
\draw[edgeS] (b)--(v0);
\draw[edgeS] (b)--(v6);
\draw[edgeS] (c)--(v5);
\draw[edgeS] (c)--(v3);
\draw[edgeS] (d)--(v2);
\draw[edgeD] (d)--(v4);

\draw[edgeS] (d) to[out=20,in=160] (a);
\draw[edgeS] (c) to[out=-20,in=-160] (b);
\draw[edgeS] (c)--(d);

\draw[edgeD] (v4) to[out=40,in=-170] (a);
\end{tikzpicture}%
\end{center}%
\vspace{-2ex}
\caption{Non\-/isomorphic uniquely 3\=/colourable triangle\-/free graphs on 12 vertices with critical chromatic number 3}\label{fig1}
\end{minipage}%
\end{center}
\end{figure}

\begin{corollary}\label{cor:constr}
For all $k\geq 3$ there are uniquely $k$\=/colourable $K_k$\=/free graphs on $2k!$ vertices with critical chromatic number $k$. 
\end{corollary}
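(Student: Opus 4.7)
The plan is to proceed by induction on $k\geq 3$, using the construction $\nu$ of Proposition~\ref{prop:properties} as the inductive step, so essentially all of the work has already been done.

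For the base case $k=3$, one of the graphs from Figure~\ref{fig1} serves as a witness: it is triangle\-/free (hence $K_3$\=/free as required for the statement $\omega(H)\leq k-1$), uniquely 3\=/colourable, has critical chromatic number $3$, and has exactly $12 = 2\cdot 3!$ vertices.

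For the inductive step, suppose $H$ is a uniquely $k$\=/colourable $K_k$\=/free graph on $2k!$ vertices with $\chi_{cr}(H)=k$, and set $G=\nu(H)$. I would now just invoke the relevant parts of Proposition~\ref{prop:properties} in sequence. Part~\ref{enum:unique} gives that $G$ is uniquely $(k+1)$\=/colourable (using $k\geq 3$); part~\ref{enum:omega} gives $\omega(G)=\omega(H)+1\leq (k-1)+1=k$, so $G$ is $K_{k+1}$\=/free; part~\ref{enum:critical} gives $\chi_{cr}(G)=\chi(G)=k+1$; and part~\ref{enum:size} yields $|V(G)|=(k+1)\cdot 2k!=2(k+1)!$. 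Thus $G$ has all the required properties for the parameter $k+1$.

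Since each of the four conditions required of the witness graph is preserved verbatim by $\nu$ thanks to Proposition~\ref{prop:properties}, there is no real obstacle beyond stating the induction; the only thing to be slightly careful about is that the hypothesis $k\geq 3$ of Theorem~\ref{thm} is maintained throughout the induction, which is automatic since we start at $k=3$ and increment.
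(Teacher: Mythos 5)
Your proposal is correct and matches the paper's own argument: the paper also takes one of the 12-vertex graphs of Figure~\ref{fig1} as the base case and obtains the general case by iterating the construction, i.e.\ as $\nu^{k-3}(G)$, citing exactly the parts of Proposition~\ref{prop:properties} you list. Your explicit induction with the vertex count $(k+1)\cdot 2k! = 2(k+1)!$ is just a slightly more detailed phrasing of the same proof.
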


\begin{proof}
For $k=3$, it is straightforward to check that the graphs in Figure~\ref{fig1} are uniquely 3\=/colourable, triangle\-/free, have 12 vertices and critical chromatic number 3. 

For $k > 3$, if $G$ is one of three graphs described above, then, by Proposition~\ref{prop:properties}, the graph $\nu^{k-3}(G)$ has all properties demanded by Corollary~\ref{cor:constr}.
\end{proof}

Probably the graphs in Figure~\ref{fig1} are the only non\-/isomorphic uniquely 3\=/colourable triangle\-/free graphs with critical chromatic number 3 on 12 vertices. 
To support this conjecture independent computer calculations would be
useful. 
For the moment we leave the conjecture open, and we conclude the
section by showing that the graphs in Figure~\ref{fig1} have the smallest possible
number of vertices.

\begin{proposition}
The graphs in Figure~\ref{fig1} have the smallest number of vertices among all uniquely 3\=/colourable triangle\-/free graphs with critical chromatic number 3. 
\end{proposition}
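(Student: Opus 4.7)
Since $\chi_{cr}(G) = \chi(G) = 3$, every proper $3$\=/colouring of $G$ has three colour classes of the same size, so $n := |V(G)|$ is divisible by $3$. Combined with the fact that every triangle\=/free graph of chromatic number $3$ has at least $5$ vertices, this reduces the task to excluding $n \in \{6, 9\}$. For $n = 6$ the three colour classes each have size $2$, and each pair of them spans a connected bipartite graph on $2+2$ vertices, hence at least $3$ edges; summing over the three pairs yields $|E(G)| \geq 9$, whereas Mantel's theorem gives $|E(G)| \leq 9$ with equality only for $K_{3,3}$, which is bipartite and so contradicts $\chi(G) = 3$.

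The substantive case is $n = 9$. Denote the colour classes $A, B, C$ (each of size $3$) and set $e_{XY} := |E(X,Y)|$. The plan is first to show $5 \leq e_{XY} \leq 6$ for every pair: the lower bound is spanning\=/tree connectivity of $G[X \cup Y]$, while the upper bound follows from the observation that no vertex can be adjacent to every vertex of a different colour class (otherwise triangle\=/freeness leaves it with no admissible neighbour in the third class), together with a short check that each value $e_{XY} \in \{7, 8, 9\}$ forces some vertex to become isolated in a different bipartite pair. Next I would rule out two pairs simultaneously reaching the value $6$: if, say, $e_{AC} = e_{BC} = 6$, then every $c \in C$ has $|N(c)\cap A| = |N(c)\cap B| = 2$, and triangle\=/freeness forces the $2 \times 2$ rectangle $(N(c) \cap A) \times (N(c)\cap B)$ to consist of non\=/edges of $G[A\cup B]$; as there are at most four such non\=/edges, all three rectangles coincide and the three $c$'s become false twins, allowing a swap to produce a $3$\=/colouring distinct from the original. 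Hence $|E(G)| \in \{15, 16\}$.

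The final step is a distribution count. Setting $x_c := |N(c)\cap A|$ and $y_c := |N(c)\cap B|$ for each $c \in C$, one has $x_c, y_c \in \{1, 2\}$ and $\sum_c x_c + \sum_c y_c = e_{AC} + e_{BC} = 10$ in both remaining subcases. I then claim $(x_c, y_c) \neq (2, 2)$ for every $c$. If $|E(G)| = 16$ and $e_{AB} = 6$ this is immediate, since $G[A\cup B]$ has only $3$ non\=/edges and a $(2,2)$\=/neighbourhood would require four. If $|E(G)| = 15$, every bipartite pair $G[X\cup Y]$ is a tree, and the "no all\=/class neighbour" argument excludes the degree sequence $(1,1,3)$ on either side, leaving $(1, 2, 2)$; the unique bipartite tree on $3 + 3$ vertices with this sequence on both sides is $P_6$, whose bipartite complement in $K_{3,3}$ is $P_4 + P_2$ and contains no $2 \times 2$ rectangle of non\=/edges. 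Consequently $x_c + y_c \leq 3$ and $\sum_c (x_c + y_c) \leq 9 < 10$, a contradiction. I expect the main obstacle to be the case analysis pinning down the degree and non\=/edge structure of each bipartite pair in the $n = 9$ case; once these reductions are in place, the closing counting step is immediate, and combining with the $12$\=/vertex construction of Figure~\ref{fig1} completes the proof.
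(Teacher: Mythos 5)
Your proof is correct, and it rests on the same two structural facts as the paper's (both established in its introduction): any two colour classes of a uniquely colourable graph induce a connected subgraph, and no vertex of a triangle-free such graph can be complete to another colour class. Where you diverge is in how the $n=9$ case is finished. The paper argues very briefly: max degree at most $2$ plus connectivity makes each class pair induce $P_6$ or $C_6$, so at least two vertices of each class have two neighbours in each other class, and a pigeonhole then produces a vertex $v\in A$ with two neighbours in $B$ and two in $C$ together with a neighbour $w\in B$ having two neighbours in $C$, forcing a common neighbour and hence a triangle. You instead run an edge-count case analysis: $5\le e_{XY}\le 6$ for each pair, exclusion of two pairs attaining $6$ via the $2\times2$ rectangle of non-edges and the resulting false twins in $C$ (here your ``swap'' deserves one explicit sentence -- the third vertex of $A$ ends up with no neighbour in $C$, so moving it into $C$ gives a second $3$-colouring, or equivalently it violates the neighbour-in-every-class property), and a final count $\sum_c(x_c+y_c)\le 9<10$ using that in the $15$-edge case each pair is a tree of maximum degree $2$, hence $P_6$, whose bipartite complement $P_4+P_2$ contains no $C_4$. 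Your route is longer but more explicit about the possible edge distributions, and you also handle $n=6$ concretely (connectivity gives at least $9$ edges, Mantel gives at most $9$ with equality only for the bipartite $K_{3,3}$), whereas the paper leaves $n=3,6$ to the reader; the paper's pigeonhole finish buys brevity at the cost of hiding the quantitative structure your argument makes visible.
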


\begin{proof}
Since the critical chromatic number is 3, the number of vertices $n$ of such graphs has to be divisible by 3. 
We leave the case $n=3$ and $n=6$ to the reader and assume that there is such a graph $G$ with $n=9$ vertices. 

As mentioned above, each two distinct colour classes of the unique 3\=/colouring $\mathcal{C}$ of $G$ induce a connected subgraph of $G$.
Therefore let $A,B\in\mathcal{C}$ be distinct colour classes, then $|A|=|B|=3$ and $G[A\cup B]$ connected. 
If there was a vertex $v\in A$ with degree~3 in $G[A\cup B]$, then each neighbour $w\in C$ of $v$ in the third colour class $C$ would form a triangle with $v$ and a suitable vertex from $B$. 
Hence $G[A\cup B]$ is either a path or a cycle on six vertices. 
In both cases, at least two vertices in $A$ have degree~2 in $G[A\cup B]$. By a symmetric argument, there are two  vertices in $A$ having degree~2 in $G[A\cup C]$ and  two  vertices in $B$ having degree~2 in $G[B\cup C]$. 
Thus, there is a vertex $v\in A$ with a neighbour $w\in B$ such that there is a common neighbour $u\in C$ of $v$ and $w$; and we obtain a triangle on $\{u,v,w\}$, a contradiction. 
\end{proof}

\subsubsection*{Comparison with other results}

In this section, we compare our new construction of uniquely colourable graphs  with a straightforward construction, with a triangle\-/free construction by Nešetřil~\cite{nevsetvril1973uniquely}, 
and a  probabilistic proof  of existence of uniquely colourable graphs of arbitrary girth by Bollobás and Sauer~\cite{bollobas1976uniquely}. 
To obtain, for an integer $k\geq 3$, a uniquely $k$\=/colourable $K_k$\=/free graphs with equal colour class sizes, none of the following constructions is suitable.
To our best knowledge, our construction is the first one to produce such
graphs; 
thus, the construction~$\nu$ and Corollary~\ref{cor:constr} seem to fill in a gap.

\newcounter{constr}

\newparagraph
\refstepcounter{constr}
\need{2cm}
\noindent\textit{Construction~\theconstr: }\label{constr1}

The complete graph $K_k$ on $k$ vertices is uniquely $k$\=/colourable. 

Given a uniquely $k$\=/colourable graph $H$, adding to it a new vertex adjacent
to all vertices of $H$ except for those of one colour class, we obtain another uniquely $k$\=/colourable graph. 
If we start with $H$ satisfying $\chi_{cr}(H) < k$, then
choosing in the above construction repeatedly a smallest colour class yields a
sequence of uniquely $k$\=/colourable graphs with increasing critical chromatic
number.

However, if the original graph has a unique maximum colour class, then, after balancing all
colour class sizes as described above, we finally obtain a $K_k$ in the resulting
graph.
Hence, depending on our starting graph $H$, in some cases we cannot obtain a uniquely $k$\=/colourable $K_k$\=/free graph with equal colour class sizes using Construction~\ref{constr1}. 

\newparagraph
\refstepcounter{constr}
\need{2cm}
\noindent\textit{Construction~\theconstr\ (Nešetřil~\cite{nevsetvril1973uniquely}): }\label{constr2}

To get a uniquely $k$\=/colourable graph, $k\geq 3$, choose $n>16\,k\cdot(2(k-2))^{2k-1}$ and start with the uniquely $2$\=/colourable path $P_n^0=P_n$ on $n$ vertices and colour classes $A_1,A_2$. 
The uniquely $k$\=/colourable graph $P_n^{k-2}$ is constructed iteratively. 
Assume that $P_n^{j-1}$, $j\geq 1$ with colour classes $A_1,\dots,A_{j+1}$ is constructed and let $\mathcal{M}^j$ be the set of all independent sets $M$ of  $P_n^{j-1}$ with $|M|=j+2$ such that $M\cap A_i\neq \emptyset$, $1\leq i\leq j+1$. 
Then $V(P_n^{j})=V(P_n^{j-1})\cup \mathcal{M}^j$ and $xy\in E(P_n^{j})$ if $xy\in E(P_n^{j-1})$ or $x\in y\in \mathcal{M}^j$. The new colour class is $A_{j+2}=\mathcal{M}^j$. 

By this construction, we obtain a triangle\-/free graph $G$ that is uniquely $k$\=/colourable with a colouring $\mathcal{C}=\{A_1,\dots,A_k\}$. 
It is $|A_1|=\Theta(n)$, $|A_2|=\Theta(n)$, $|A_3|=\Theta(n^3)$, $|A_4|=\Theta(n^8)$, \dots. 

Thus, sizes of colour classes are pairwise different (with a possible exception of $|A_1| = |A_2 |$);
moreover, the critical chromatic number tends to $k-1$ for $k\to \infty$.

\newparagraph
\refstepcounter{constr}
\need{2cm}
\noindent\textit{Construction~\theconstr\ (Bollobás, Sauer~\cite{bollobas1976uniquely}): }\label{constr3}

Bollobás and Sauer used a probabilistic approach to show the existence of uniquely $k$\=/colourable graphs 
with girth at least (a given constant) $g$. 
They started with  $k$\=/partite graphs, each partite set of size $n$ (assuming $n$ large enough) and $m=\binom{k}{2}n^{1+\varepsilon}$ uniformly chosen edges with $0<\varepsilon<\frac{1}{4g}$. 

The probability that such a graph contains only few cycles of length smaller than $g$ and that these cycles do not share a vertex tends to~1 (with $n\to\infty$). 
By removing a few edges to destroy all short cycles, the graph is asymptotically almost surely still uniquely $k$\=/colourable.
Thus, we obtain the existence of a demanded graph. 

Choosing $g=4$ and analysing arguments presented in~\cite{bollobas1976uniquely},  one can show
that there exists  a uniquely $k$\=/colourable triangle\-/free graph on $\Theta(k^{129})$ vertices. 
However, since the proof is probabilistic one, it yields no hint for
constructing such graphs.

\subsection*{Acknowledgement}
The author thanks Matthias Kriesell for his valuable suggestions. He further shows appreciation to the referees, whose constructive remarks helped to improve
the quality of this article.

\printbibliography

\end{document}